\numberwithin{equation}{section}
\numberwithin{figure}{section}
\theoremstyle{plain}
\newtheorem{thm}{\protect\theoremname}
  \theoremstyle{plain}
  \newtheorem{lem}[thm]{\protect\lemmaname}
  \theoremstyle{remark}
  \newtheorem{rem}[thm]{\protect\remarkname}
  \theoremstyle{plain}
  \newtheorem{prop}[thm]{\protect\propositionname}
  \providecommand{\lemmaname}{Lemma}
  \providecommand{\propositionname}{Proposition}
  \providecommand{\remarkname}{Remark}
\providecommand{\theoremname}{Theorem}
\begin{document}

\title{Stochastic differential equations driven by generalized grey noise}

\author{\textbf{José Luís da Silva},\\
CCM, University of Madeira, Campus da Penteada,\\
9020-105 Funchal, Portugal.\\
Email: luis@uma.pt\and \textbf{Mohamed Erraoui}\\
Université Cadi Ayyad, Faculté des Sciences Semlalia,\\
 Département de Mathématiques, BP 2390, Marrakech, Maroc\\
Email: erraoui@uca.ma}
\maketitle
\begin{abstract}
In this paper we establish a substitution formula for stochastic differential
equation driven by generalized grey noise. We then apply this formula
to investigate the absolute continuity of the solution with respect
to the Lebesgue measure and the positivity of the density. Finally,
we derive an upper bound and show the smoothness of the density.\medskip{}

\noindent \textbf{Keywords}: Generalized grey Brownian motion, fractional
Brownian motion, stochastic differential equations, absolute continuity. 
\end{abstract}
\tableofcontents{}

\section{Introduction}

A number of stochastic models for explaining anomalous diffusion have
been introduced in the literature, among them we would like to quote
the fractional Brownian motion (fBm), see e.g.~\cite{MandelbrotNess1968},
\cite{Taqqu03}, the Lévy flights \cite{DSU08}, the grey Brownian
motion (gBm) \cite{Schneider90}, \cite{MR1190506}, the generalized
grey Brownian motion (ggBm) denoted by $B_{\alpha,\beta}$ \cite{Mura2008},
\cite{Mura_mainardi_09}, \cite{Mura_Pagnini_08} and references therein.
The latter is a family of self-similar with stationary increments
processes ($\frac{\alpha}{2}$-sssi) where the two real parameters
$\alpha\in\left(0,2\right)$ and $\beta\in\left(0,1\right]$. It includes
fBm when $\alpha\in\left(0,2\right)$ and $\beta=1$, and time-fractional
diffusion stochastic processes when $\alpha=\beta\in\left(0,1\right)$.
The gBm corresponds to the choice $\alpha=\beta$, with $0<\beta<1$.
Finally, the standard Brownian motion (Bm) is recovered by setting
$\alpha=\beta=1$. We observe that only in the particular case of
Bm the corresponding process is Markovian. Moreover the process $B_{\alpha,\beta}$
has $(\frac{\alpha}{2}-\varepsilon)$-Hölder continuous trajectories
for all $\varepsilon>0$ and it can be represented (in law) as a scale
mixture $(\sqrt{Y_{\beta}}B_{H})$ where $B_{H}$ is a standard fBm
with Hurst parameter $H=\alpha/2$ and $Y_{\beta}$ is an independent
non-negative random variable, for the details see Section \ref{sec:prelim}.

We will consider the following stochastic differential equation (SDE)
on $\mathbb{R}^{n}$ 
\begin{equation}
X_{t}=x_{0}+\sum_{j=1}^{d}\int_{0}^{t}V_{j}(X_{s})dB_{\alpha,\beta}^{j}(s)+\int_{0}^{t}V_{0}(X_{s})\, ds,\quad t\in\left[0,T\right],\label{eq:1}
\end{equation}
where $x_{0}\in\mathbb{R}^{n},$ $T>0$ is a fixed time, $\bm{B}_{\alpha,\beta}=(B_{\alpha,\beta}^{1},\ldots,B_{\alpha,\beta}^{d})$
is a $d$-dimensional ggBm, $\alpha\in\left(1,2\right)$, $\beta\in(0,1]$
and $\left\{ V_{j};0\leq j\leq d\right\} $ is a collection of vector
fields of $\mathbb{R}^{n}$.

The stochastic integral appearing in (\ref{eq:1}) is a pathwise Riemann-Stieltjes
integral, see \cite{Young1936}. It is well known that, under suitable
assumptions on $\bm{V}=(V_{1},\ldots,V_{d}),$ the equation (\ref{eq:1})
has a unique solution which is $(\frac{\alpha}{2}-\varepsilon)$-Hölder
continuous for all $\varepsilon>0$. This result was obtained in \cite{Lyons1994}
using the notion of $p$-variation. The theory of rough paths, introduced
by Lyons in \cite{Lyons1994}, was used by Coutin and Qian in order
to prove an existence and uniqueness result for the equation (\ref{eq:1})
driven by fBm, see \cite{Coutin2002}. Nualart and R\u{a}\c{s}canu
\cite{Nualart2002} have established the existence of a unique solution
for a class of general differential equations that includes (\ref{eq:1})
using the fractional integration by parts formula obtained by Zähle
for Young integral, see \cite{Zaehle1998}. 

The representation in law of $\bm{B}_{\alpha,\beta}$, see (\ref{eq:gBm_rep_ndim})
below, allows us to consider, instead of the equation (\ref{eq:1}),
the following equation 
\begin{equation}
X_{t}^{H}=x_{0}+\sum_{j=1}^{d}\int_{0}^{t}V_{j}(X_{s}^{H})d\big(\sqrt{Y_{\beta}}B_{H}^{j}\big)(s)+\int_{0}^{t}V_{0}(X_{s}^{H})\, ds,\quad t\in\left[0,T\right].\label{eq:wrep}
\end{equation}
This is due to the fact that the solutions of the SDEs (\ref{eq:1})
and (\ref{eq:wrep}) induces the same distribution on the space of
continuous functions $C\left(\left[0,T\right];\mathbb{R}^{n}\right)$.
Furthermore, since the stochastic integral in (\ref{eq:wrep}) is
a pathwise Riemann-Stieltjes integral, then the SDE (\ref{eq:wrep})
can be written as
\begin{equation}
X_{t}^{H}=x_{0}+\sqrt{Y_{\beta}}\sum_{j=1}^{d}\int_{0}^{t}V_{j}(X_{s}^{H})dB_{H}^{j}(s)+\int_{0}^{t}V_{0}(X_{s}^{H})\, ds,\quad t\in\left[0,T\right].\label{eq:eq multip}
\end{equation}

The main purpose of this paper is to establish a substitution formula
(SF) for equation (\ref{eq:eq multip}). Let us now describe our approach.
For each $y>0$, we consider the following equation 
\begin{equation}
X_{t}^{H}(y)=x_{0}+\sqrt{y}\sum_{j=1}^{d}\int_{0}^{t}V_{j}(X_{s}^{H}(y))dB_{H}^{j}(s)+\int_{0}^{t}V_{0}(X_{s}^{H}(y))\, ds.\label{eq:2}
\end{equation}
It is well known that, under suitable assumptions, see e.g.~Nualart
and R\u{a}\c{s}canu \cite{Hu2007}, that if $1-H<\lambda<\frac{1}{2}$
the SDE (\ref{eq:2}) has a strong $\left(1-\lambda\right)$-Hölder
continuous solution $X_{\cdotp}^{H}(y)$. To establish a SF, the natural
idea is to replace $y$ in (\ref{eq:2}) by the random variable $Y_{\beta}$
and prove that $X_{\cdot}^{H}(Y_{\beta})$ satisfies the SDE (\ref{eq:eq multip}).
For more details on the SF we refer to \cite{Nualart2006a}. To handle
this problem, the key is to prove, for each $t\in\left[0,T\right]$,
the following equalities 
\begin{equation}
\left.\int_{0}^{t}V_{j}(X_{s}^{H}(y))dB_{H}^{j}(s)\right|_{y=Y_{\beta}}=\int_{0}^{t}V_{j}(X_{s}^{H}(Y_{\beta}))dB_{H}^{j}(s),\quad j=1,\ldots,d,\label{eq:subs1}
\end{equation}
and 
\begin{equation}
\int_{0}^{t}V_{0}(X_{s}^{H}(y))\, ds\bigg|_{y=Y_{\beta}}=\int_{0}^{t}V_{0}(X_{s}^{H}(Y_{\beta}))\, ds.\label{eq:subs 2}
\end{equation}
 To this end we need to study the regularity of the solution $X_{t}^{H}(y)$
of the SDE (\ref{eq:2}) with respect to $y$. Once this is accomplished,
we use the SF to show the absolute continuity of the law of the solution
$X^{H}(Y_{\beta})$ and the positivity of its density $p_{X_{t}^{H}(Y_{\beta})}$.
Subsequently to give a Gaussian mixture type upper bound and to study
the smoothness of $p_{X_{t}^{H}(Y_{\beta})}$. We emphasize the fact
that these results are essentially due to those established for the
density $p_{X_{t}^{H}(y)}$ of the law of $X_{t}^{H}(y)$, see \cite{BOT14,Baudoin:2014tw,Nualart2009},
and the dependence with respect to $y$ of $p_{X_{t}^{H}(y)}$. Indeed,
using the SF and the independence of $\{X_{t}^{H}(y),0\leq t\leq T,y>0\}$
and $Y_{\beta}$, the density $p_{X^{H}(Y_{\beta})}$ is given by
\[
p_{X_{t}^{H}(Y_{\beta})}(z)=\int_{0}^{+\infty}p_{X_{t}^{H}(y)}(z)p_{Y_{\beta}}(y)\, dy,\quad z\in\mathbb{R}^{n},
\]
where $p_{Y_{\beta}}$ is the density of the law of $Y_{\beta}$.
Hence, the density $p_{X_{t}^{H}(Y_{\beta})}$ is given in terms of
a parameter dependent integral, implying that all the properties of
$p_{X_{t}^{H}(Y_{\beta})}$ will be deducted from those of $p_{X_{t}^{H}(y)}$.
This persuade us to borrow the hypotheses of the cited works to realize
the above results.

\section{Preliminaries}

\label{sec:prelim}

According to Mura and Pagnini \cite{Mura_Pagnini_08}, the ggBm $B_{\alpha,\beta}$
is a stochastic process defined on a probability space $\left(\Omega,\mathcal{F},\mathbb{P}\right)$
such that for any collection $0\leq t_{1}<t_{2}<\ldots<t_{n}<\infty$
the joint probability density function of $\left(B_{\alpha,\beta}(t_{1}),\ldots,B_{\alpha,\beta}(t_{n})\right)$
is given by 

\begin{equation}
f_{\alpha,\beta}(x,t_{1},\ldots,t_{n})=\dfrac{\left(2\pi\right)^{-\frac{n}{2}}}{\sqrt{\det(\Sigma_{\alpha})}}\int_{0}^{\infty}\dfrac{1}{\tau^{n/2}}\exp\left(-\dfrac{x^{\top}\Sigma_{\alpha}^{-1}x}{2\tau}\right)M_{\beta}(\tau)d\tau,\label{eq:density}
\end{equation}
where $n\in\mathbb{N}$, $x\in\mathbb{R}^{n}$, $\Sigma_{\alpha}=(a_{i,j})_{i,j=1}^{n}$
is the matrix given by 
\[
a_{i,j}=t_{i}^{\alpha}+t_{j}^{\alpha}-|t_{i}-t_{j}|^{\alpha},
\]
and $M_{\beta}$ is the so-called $M$-Wright probability density
function (a natural generalization of the Gaussian density) which
is related to the Mittag-Leffler function through the following Laplace
transform 
\begin{equation}
\int_{0}^{\infty}e^{-s\tau}M_{\beta}(\tau)\, d\tau=E_{\beta}(-s).\label{eq:M_wright}
\end{equation}
Here $E_{\beta}$ is the Mittag-Leffler function of order $\beta$,
defined by
\[
E_{\beta}(x)=\sum_{n=0}^{\infty}\frac{x^{n}}{\Gamma(\beta n+1)},\quad x\in\mathbb{R}.
\]
It follows from (\ref{eq:density}) that for a given $u=(u_{1},\ldots,u_{n})\in\mathbb{R}^{n}$,
$n\in\mathbb{N}$ and any collection $\{B_{\alpha,\beta,}(t_{1}),\ldots,B_{\alpha,\beta,}(t_{n})\}$
with $0\leq t_{1}<t_{2}<\ldots<t_{n}<\infty$ we have
\begin{equation}
\mathbb{E}\left(\exp\left(i\sum_{k=1}^{n}u_{k}B_{\alpha,\beta}(t_{k})\right)\right)=E_{\beta}\left(-\frac{1}{2}u^{\top}\Sigma_{\alpha}u\right).\label{eq:gBm_nG}
\end{equation}
Equation~(\ref{eq:gBm_nG}) shows that ggBm, which is not Gaussian
in general, is a stochastic process defined only through its first
and second moments which is a property of Gaussian processes.

The following properties can be easily derived from~(\ref{eq:gBm_nG}).
\begin{enumerate}
\item $B_{\alpha,\beta}(0)=0$ almost surely. In addition, for each $t\geq0$,
the moments of any order are given by
\[
\begin{cases}
\mathbb{E}(B_{\alpha,\beta}^{2n+1}(t)) & =0,\\
\mathbb{E}(B_{\alpha,\beta}^{2n}(t)) & =\frac{(2n)!}{2^{n}\Gamma(\beta n+1)}t^{n\alpha}.
\end{cases}
\]

\item For each $t,s\geq0$, the characteristic function of the increments
is 
\begin{equation}
\mathbb{E}\big(e^{iu(B_{\alpha,\beta}(t)-B_{\alpha,\beta}(s))}\big)=E_{\beta}\left(-\frac{u^{2}}{2}|t-s|^{\alpha}\right),\quad u\in\mathbb{R}.\label{eq:cf_gBm_increments}
\end{equation}

\item The covariance function has the form
\begin{equation}
\mathbb{E}(B_{\alpha,\beta}(t)B_{\alpha,\beta}(s))=\frac{1}{2\Gamma(\beta+1)}(t^{\alpha}+s^{\alpha}-|t-s|^{\alpha}),\quad t,s\geq0.\label{eq:auto-cv-gBm}
\end{equation}

\end{enumerate}
It was shown in \cite{Mura_Pagnini_08} that the ggBm $B_{\alpha,\beta}$
admits the following representation
\begin{equation}
\big\{ B_{\alpha,\beta}(t),\; t\geq0\big\}\overset{d}{=}\big\{\sqrt{Y_{\beta}}B_{H}(t),\; t\geq0\big\},\label{gbm-rep}
\end{equation}
where $\overset{d}{=}$ denotes the equality of the finite dimensional
distribution and $B_{H}$ is a standard fBm with Hurst parameter $H=\alpha/2$.
$Y_{\beta}$ is an independent non-negative random variable with probability
density function $M_{\beta}$. A process with the representation given
as in (\ref{gbm-rep}) is known to be variance mixture of normal distributions.
A consequence of the representation (\ref{gbm-rep}) is the Hölder
continuity of the trajectories of ggBm which reduces to the Hölder
continuity of the fBm. Thus we have
\begin{equation}
\mathbb{E}(|B_{\alpha,\beta}(t)-B_{\alpha,\beta}(s)|^{p})=c_{p}|t-s|^{p\alpha/2}.\label{eq:cont_gBm}
\end{equation}
We conclude that the process $B_{\alpha,\beta}$ has $(\frac{\alpha}{2}-\varepsilon)$-Hölder
continuous trajectories for all $\varepsilon>0$. So, we can use the
integral introduced by Young \cite{Young1936} with respect to $B_{\alpha,\beta}$.
That is, for any Hölder continuous function $f$ of order $\gamma$
such that $\gamma+\left(\alpha/2\right)>1$ and every subdivision
$\left(t_{i}^{n}\right)_{i=0,\ldots,T}$ of $[0,T]$, whose mesh tends
to $0$, as $n$ goes to $\infty$, the Riemann sums 
\[
\sum_{i=0}^{n-1}f(t_{i}^{n})\left(B_{\alpha,\beta}(t_{i+1}^{n})-B_{\alpha,\beta}(t_{i}^{n})\right)
\]
converge to a limit which is independent of the subdivision $\left(t_{i}^{n}\right)_{i=0,\ldots,T}$.
We denote this limit by 
\[
\int_{0}^{T}f(t)\, dB_{\alpha,\beta}(t).
\]
Till now we have recalled the ggBm in $1$-dimension, but from now
on we use a $d$-dimensional ggBm $\bm{B}_{\beta,\alpha}=(B_{\alpha,\beta}^{1},\ldots,B_{\alpha,\beta}^{d})$
($0<\beta\le1$, $1<\alpha\le2$) with characteristic function 
\[
\mathbb{E}\big(e^{i(\bm{x},\bm{B}_{\alpha,\beta}(t))_{\mathbb{R}^{d}}}\big)=E_{\beta}\left(-\frac{1}{2}(\bm{x},\bm{x})_{\mathbb{R}^{d}}t^{\alpha}\right)
\]
and the representation in law 
\begin{equation}
\bm{B}_{\alpha,\beta}(t)=\sqrt{Y_{\beta}}\bm{B}_{H}(t),\quad t\geq0,\label{eq:gBm_rep_ndim}
\end{equation}
where $Y_{\beta}$ is independent of $\bm{B}_{H}(t)$, $\bm{B}_{H}$
is a $d$-dimensional fBm with Hurst parameter $H=\alpha/2$. 

\noindent \textbf{Notations}: Throughout this paper, unless otherwise
specified we will make use of the following notations:

For $0<\lambda<1$ we denote by $C^{\lambda}\left(0,T,\mathbb{R}^{d}\right)$
the space of all $\lambda$-Hölder continuous functions $f:[0,T]\longrightarrow\mathbb{R}^{d}$,
equipped with the norm 
\[
\|f\|_{\lambda}=\|f\|_{0,T\infty}+\|f\|_{0,T,\lambda}
\]
where
\[
\|f\|_{0,T,\infty}=\underset{0\leq t\leq T}{\sup}\,\left|f(t)\right|,\qquad\qquad\|f\|_{0,T,\lambda}=\underset{0\leq s<t\leq T}{\sup}\,\dfrac{\left|f(t)-f(s)\right|}{\left|t-s\right|^{\lambda}}.
\]
For $k,n,m\in\mathbb{N}$ we denote by $C_{b}^{k}:=C_{b}^{k}(\mathbb{R}^{n},\mathbb{R}^{m})$
the space of all bounded functions on $\mathbb{R}^{n}$ which are
$k$ times continuously differentiable in Fréchet sense with bounded
derivative up to the $k$th order, equipped with the norm 
\[
\|f\|_{C_{b}^{k}}=\|f\|_{\infty}+\|Df\|_{\infty}+\cdots+\|D^{k}f\|_{\infty}<\infty.
\]
We also denote by $C_{b}^{\infty}:=C_{b}^{\infty}(\mathbb{R}^{n},\mathbb{R}^{m})$
the class of all infinitely differentiable (in Fréchet sense) bounded
functions on $\mathbb{R}^{n}$ with bounded derivatives of all orders.

\section{Substitution theorem}

\label{sec:subs}

Throughout this paper we assume that the coefficients $V_{0}$ and
$\bm{V}$ satisfy the following hypothesis
\begin{description}
\item [{(H.1)}] 
\[
V_{0}\in C_{b}^{1},\;\bm{V}\in C_{b}^{2}.
\]

\end{description}
First we give the regularity of the solution $X_{t}^{H}(y)$ of the
SDE (\ref{eq:2}) with respect to $y$. This result will be proved
using the following Fernique-type lemma due to Saussereau \cite{Saussereau2012}.
\begin{lem}
[cf.~Lemma~2.2 in \cite{Saussereau2012}]\label{lem:fernique}

(i). Let $T>0$ and $1/2<\delta<H<1$ be given. Then, for any $\tau<1/(128\left(2T\right)^{2\left(H-\delta\right)})$,
we have 
\[
\mathbb{E}\left(\exp\left(\tau\|\bm{B}_{H}\|_{0,T,\delta}^{2}\right)\right)\leq\left(1-128\tau\left(2T\right)^{2\left(H-\delta\right)}\right)^{-1/2}.
\]

(ii). For any integer $k\geq1$ we have 
\[
\mathbb{E}\left(\|\bm{B}_{H}\|_{0,T,\delta}^{2k}\right)\leq32^{k}(2T)^{2k(H-\delta)}(2k)!.
\]
\end{lem}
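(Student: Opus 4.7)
The statement is a Fernique-type estimate for the $\delta$-H\"older seminorm of a $d$-dimensional fractional Brownian motion with $\delta<H$. Because the coordinates $B_H^i$ are Gaussian, one could in principle invoke Fernique's general theorem on the finiteness of exponential moments of Gaussian semi-norms; however, that abstract route would not yield the explicit constants $128$ in (i) and $32^k(2T)^{2k(H-\delta)}(2k)!$ in (ii). My plan is therefore to combine the Garsia--Rodemich--Rumsey (GRR) inequality with the elementary Gaussian chi-squared moment identity, which is the standard Saussereau-style path.

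\textbf{Step 1 (GRR).} I would apply the GRR inequality to each coordinate $B_H^i$ on $[0,T]$ with $\Psi(x)=x^2$ and with weight function $p(u)=u^{\delta+1/2}$ (adjusted by a factor so that the exponents are sharp). This converts the supremum over pairs $(s,t)$ into a double integral and yields a deterministic estimate of the shape
\[
\|B_H^i\|_{0,T,\delta}^{2}\;\leq\;C\iint_{[0,T]^2}\frac{|B_H^i(u)-B_H^i(v)|^{2}}{|u-v|^{2\delta+2}}\,du\,dv,
\]
so that, summing over the $d$ coordinates, $\|\bm B_H\|_{0,T,\delta}^{2}$ is controlled by a positive linear functional of the squared increments.

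\textbf{Step 2 (Gaussian moments and Fubini).} For fixed $u\neq v$, the increment $B_H^i(u)-B_H^i(v)$ is centered Gaussian with variance $|u-v|^{2H}$, so
\[
\mathbb{E}\exp\!\Big(\lambda\,\tfrac{(B_H^i(u)-B_H^i(v))^{2}}{|u-v|^{2H}}\Big)=(1-2\lambda)^{-1/2},\qquad\lambda<\tfrac12.
\]
I would then use Jensen's inequality (in the form $\exp(\int f\,d\mu)\leq \int\exp(Nf)\,d\mu$ after normalizing $d\mu$ to a probability measure on $[0,T]^2$) together with Fubini to exchange $\mathbb{E}$ and the double integral over $(u,v)$. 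The integrability of $|u-v|^{2H-2\delta-2}$ on $[0,T]^2$ produces the factor $(2T)^{2(H-\delta)}$, and tracking the normalizing constants from GRR and from the chi-squared identity is what pins down the numerical constant $128$ and the $(1-128\tau(2T)^{2(H-\delta)})^{-1/2}$ in (i).

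\textbf{Step 3 (Part (ii)).} Once (i) is proved, part (ii) follows by expanding the left-hand side of (i) as a Taylor series $\sum_k \tau^k \mathbb{E}\|\bm B_H\|_{0,T,\delta}^{2k}/k!$, expanding $(1-128\tau(2T)^{2(H-\delta)})^{-1/2}$ as a generalized binomial series, and matching the coefficient of $\tau^k$, using $\binom{-1/2}{k}(-1)^k=\frac{(2k)!}{4^k (k!)^2}$ to land on $32^k(2T)^{2k(H-\delta)}(2k)!$. (Alternatively one can choose $\tau$ optimally and use a Markov-type bound, but the series-matching is cleaner for the stated constant.)

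\textbf{Main obstacle.} The conceptual structure is standard, but the genuine difficulty lies in the numerical bookkeeping: the precise GRR weight, the factor of $2$ from GRR's symmetrization, the $(1-2\lambda)^{-1/2}$ from the $\chi^{2}$-moment, and the integral of $|u-v|^{2(H-\delta)-2}$ over $[0,T]^{2}$ must all combine to reproduce the exact constants $128$ and $32^k(2k)!$. A slightly different choice of GRR weights yields comparable but different constants, so unless one needs these exact numbers, it would be safer simply to cite Saussereau; in the proof proper I would either reproduce this computation verbatim or quote it.
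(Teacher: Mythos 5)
The paper gives no proof of this lemma at all: it is quoted verbatim from Lemma~2.2 of Saussereau \cite{Saussereau2012}, so the honest "paper's proof" is a citation, and your closing remark that it would be safer simply to cite Saussereau is in fact what the authors do. Your reconstruction of Saussereau's argument, however, contains two concrete gaps. The first is in Step~1: the Garsia--Rodemich--Rumsey inequality with $\Psi(x)=x^{2}$ and a power weight cannot work in the regime of the lemma. To make the GRR output scale like $|t-s|^{\delta}$ with $\Psi^{-1}(y)=\sqrt{y}$ you are forced to take $p(u)\sim u^{\delta+1}$, and then the controlling double integral is
\[
B=\iint_{[0,T]^{2}}\frac{|B_{H}^{i}(u)-B_{H}^{i}(v)|^{2}}{|u-v|^{2\delta+2}}\,du\,dv,
\]
whose expectation is $\iint|u-v|^{2(H-\delta)-2}\,du\,dv=+\infty$ because $1/2<\delta<H<1$ forces $H-\delta<1/2$; in fact $B=+\infty$ almost surely, so the inequality you write in Step~1 is vacuous. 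With your alternative weight $p(u)=u^{\delta+1/2}$ the double integral is finite but the GRR kernel integral $\int_{0}^{r}u^{-1}\,dp(u)$ diverges at $0$. There is no choice of power weight that avoids both failures when $\Psi(x)=x^{2}$; one must instead take $\Psi(x)=x^{2q}$ with $2q(H-\delta)>1$, or the exponential gauge $\Psi(x)=e^{x^{2}/4}-1$ as in Fernique-type arguments, and this changes the whole bookkeeping rather than merely the constants.

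The second gap is in Step~3. An inequality between two power series with nonnegative coefficients does not imply the termwise inequality of their coefficients, so "matching the coefficient of $\tau^{k}$" is not a valid inference from (i). The legitimate version of your idea is to extract the single nonnegative term $\tau^{k}\mathbb{E}\|\bm{B}_{H}\|_{0,T,\delta}^{2k}/k!\leq(1-128\tau(2T)^{2(H-\delta)})^{-1/2}$ and optimize over $\tau$, but this yields $\mathbb{E}\|\bm{B}_{H}\|_{0,T,\delta}^{2k}\lesssim k!\,(128(2T)^{2(H-\delta)})^{k}\sqrt{2k+1}$, which for small $k$ is strictly worse than the stated bound $32^{k}(2T)^{2k(H-\delta)}(2k)!$ (for $k=1$ it gives roughly $332\,(2T)^{2(H-\delta)}$ against the claimed $64\,(2T)^{2(H-\delta)}$). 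In the source, (ii) is not deduced from (i); both are obtained from the same deterministic reduction of $\|\bm{B}_{H}\|_{0,T,\delta}^{2}$ to an average of the normalized squared increments $|B_{H}(u)-B_{H}(v)|^{2}/|u-v|^{2H}$ against a probability measure, after which (i) follows from Jensen, Fubini and $\mathbb{E}\exp(\lambda Z^{2})=(1-2\lambda)^{-1/2}$, and (ii) follows from Jensen and $\mathbb{E}|Z|^{2k}=(2k)!/(2^{k}k!)$. Your Step~2 is the right mechanism for (i); the missing piece is the correct deterministic inequality feeding into it.
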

\begin{rem}
For any $\tau<1/(128\left(2T\right)^{2\left(H-\delta\right)})$ we
have the following tail norm estimate for $\bm{B}_{H}$:
\begin{equation}
\mathbb{P}\big[\|\bm{B}_{H}\|_{0,T,\delta}>r\big]\leq M\exp\left(-\tau r^{2}\right),\label{eq:tail-estimate_bh}
\end{equation}
where $M=\left(1-128\tau\left(2T\right)^{2\left(H-\delta\right)}\right)^{-1/2}$.
\end{rem}
The following estimate is crucial in the proof of our main result,
cf.~Theorem~\ref{thm:main} below. It is worth to notice that such
estimate was obtained and improved by Hu and Nualart \cite{Hu2007}
and, afterward, refined in Proposition~2.3 in \cite{Saussereau2012}.
\begin{prop}
\label{prop:estim sol}Let $T>0$ and $1/2<\delta<H<1$ be given.
Under Hypothesis (\textbf{H.1}) there exist a positive constant $C_{n}$
depending on $T,\delta,H,\left\Vert V_{0}\right\Vert _{C_{b}^{1}}$
and $\|\bm{V}\|_{C_{b}^{2}}$ such that
\begin{eqnarray*}
\|X^{H}(y)-X^{H}(\widetilde{y})\|_{\delta} & \leq & C_{n}\left|\sqrt{y}-\sqrt{\widetilde{y}}\right|\|\bm{V}\|_{C_{b}^{1}}\|\bm{B}_{H}\|_{0,T,\delta}\\
 &  & \times\left(1+\|\bm{B}_{H}\|_{0,T,\delta}\right)^{2/\delta}\exp\left(C_{n}\|\bm{B}_{H}\|_{0,T,\delta}^{1/\delta}\right)
\end{eqnarray*}
for all $\left|y\right|,\left|\widetilde{y}\right|\leq n$.
\end{prop}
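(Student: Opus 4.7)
The plan is to obtain the bound by writing the equation satisfied by the difference $Z_t := X_t^H(y) - X_t^H(\tilde y)$, applying standard Young integral estimates, and then closing the inequality via a local iteration that produces the exponential factor in $\|\bm{B}_H\|_{0,T,\delta}^{1/\delta}$.

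First I would subtract the two integral equations for $X^H(y)$ and $X^H(\tilde y)$ and split each integrand of the form $\sqrt y\, V_j(X^H(y)) - \sqrt{\tilde y}\, V_j(X^H(\tilde y))$ as
\[
(\sqrt y-\sqrt{\tilde y})\,V_j(X^H(y)) \;+\; \sqrt{\tilde y}\bigl(V_j(X^H(y)) - V_j(X^H(\tilde y))\bigr),
\]
so that the explicit $|\sqrt y-\sqrt{\tilde y}|$ factor is isolated, while the remaining part is Lipschitz-controlled by $Z$ via $\bm V\in C_b^2\subset C_b^1$. For the drift, the same splitting with $V_0\in C_b^1$ applies (no $\sqrt{\cdot}$ factor but this is harmless since $|y|,|\tilde y|\leq n$).

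Next I would invoke the classical Young estimate: for $\gamma+\delta>1$,
\[
\Bigl\|\int_0^\cdot f_s\, dg_s\Bigr\|_{s,t,\delta}\leq C\bigl(\|f\|_{s,t,\infty}+\|f\|_{s,t,\delta}(t-s)^{\delta}\bigr)\|g\|_{s,t,\delta},
\]
applied on a subinterval $[s,t]\subset[0,T]$. Using $\bm V\in C_b^2$, the Hölder seminorm of $V_j(X^H(y))-V_j(X^H(\tilde y))$ is bounded by $\|\bm V\|_{C_b^2}\bigl(\|Z\|_{s,t,\delta}+\|Z\|_{s,t,\infty}(\|X^H(y)\|_{s,t,\delta}+\|X^H(\tilde y)\|_{s,t,\delta})\bigr)$. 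Combined with the a priori Saussereau/Hu–Nualart estimate
\[
\|X^H(y)\|_{0,T,\delta}\;\leq\; C_n\,(1+\|\bm B_H\|_{0,T,\delta})^{1/\delta}\exp\bigl(C_n\|\bm B_H\|_{0,T,\delta}^{1/\delta}\bigr),
\]
valid under (H.1) for $|y|\leq n$, one obtains, on any interval $[s,t]$ of length $\Delta$, an inequality of the shape
\[
\|Z\|_{s,t,\delta}\leq A\,|\sqrt y-\sqrt{\tilde y}|\,\|\bm B_H\|_{0,T,\delta}+B\,\Delta^{\delta}\,\|Z\|_{s,t,\delta}+C\,\|Z\|_{s,t,\infty},
\]
with $A,B,C$ explicit polynomial expressions in $\|\bm V\|_{C_b^2}$, $\|V_0\|_{C_b^1}$ and $\|\bm B_H\|_{0,T,\delta}$.

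Then I would choose the subinterval length $\Delta$ so that $B\Delta^\delta\leq 1/2$; this forces the number of intervals covering $[0,T]$ to grow like $B^{1/\delta}$, i.e.\ like $\|\bm B_H\|_{0,T,\delta}^{1/\delta}$ up to a constant depending on $n,T,\delta,H$ and the $C_b^k$ norms. Iterating the local inequality across these subintervals in the standard fashion (see e.g.\ Lemma~7.1 in Nualart–Rășcanu or the iteration in Saussereau's Proposition~2.3) yields the exponential factor $\exp(C_n\|\bm B_H\|_{0,T,\delta}^{1/\delta})$, while the polynomial prefactor $(1+\|\bm B_H\|_{0,T,\delta})^{2/\delta}$ arises from the product of the number of intervals with the Hölder norms of $X^H(y), X^H(\tilde y)$ appearing in the Young estimate. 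Finally, $\sup$ and Hölder parts are glued into $\|Z\|_\delta=\|Z\|_{0,T,\infty}+\|Z\|_{0,T,\delta}$ using $Z_0=0$.

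The main obstacle is the bookkeeping of the interval iteration: one must track simultaneously the sup-norm and Hölder seminorm of $Z$, since the Young estimate mixes them, while ensuring that the dependence on $\|\bm B_H\|_{0,T,\delta}$ enters only through the precise powers stated ($1$ for the linear prefactor, $2/\delta$ for the polynomial factor, and $1/\delta$ in the exponential). The other delicate point is making the constants uniform in $|y|,|\tilde y|\leq n$; this is where the dependence of $C_n$ on $n$ appears through the bound $\sqrt y+\sqrt{\tilde y}\leq 2\sqrt n$.
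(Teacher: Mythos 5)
Your argument is correct and follows essentially the standard route: the paper itself gives no proof of this proposition, citing instead Hu--Nualart \cite{Hu2007} and Proposition~2.3 of Saussereau \cite{Saussereau2012}, and your decomposition of $\sqrt{y}\,V_j(X^H(y))-\sqrt{\tilde y}\,V_j(X^H(\tilde y))$, the Young estimate, and the small-interval iteration producing $\exp\left(C_n\|\bm{B}_{H}\|_{0,T,\delta}^{1/\delta}\right)$ are exactly the ingredients of that cited proof. The only cosmetic deviation is your exponential a priori bound on $\|X^{H}(y)\|_{0,T,\delta}$ where a polynomial one suffices; since $\left(1+\|\bm{B}_{H}\|_{0,T,\delta}\right)^{2/\delta}\geq1$, this merely enlarges the constant inside the exponential and still yields the stated form.
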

Now we are ready to state the regularity of the solution $X_{t}^{H}(y)$
of the SDE (\ref{eq:2}) with respect to $y$. 
\begin{prop}
\label{prop:regular}Let $T>0$ and $1/2<\delta<H<1$ be given. Under
Hypothesis (\textbf{H.1}) there exist a positive $\widetilde{C}_{n}>0$
depending on $T$, $\delta$, $H$, $\|V_{0}\|_{C_{b}^{1}}$ and $\|\bm{V}\|_{C_{b}^{2}}$
such that 
\[
\mathbb{E}\left(\underset{s\leq t}{\sup}\left|X_{s}^{H}(y)-X_{s}^{H}(\widetilde{y})\right|^{4}\right)\leq\widetilde{C}_{n}\left|y-\widetilde{y}\right|^{2},\quad t\in[0,T]
\]
for all $\left|y\right|,\left|\widetilde{y}\right|\leq n$.\end{prop}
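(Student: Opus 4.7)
The plan is to derive the estimate directly from Proposition \ref{prop:estim sol} by raising to the fourth power and taking expectation. First I would observe that since $X_{0}^{H}(y)=X_{0}^{H}(\widetilde{y})=x_{0}$, one has
\[
\sup_{s\leq t}\bigl|X_{s}^{H}(y)-X_{s}^{H}(\widetilde{y})\bigr|\leq\|X^{H}(y)-X^{H}(\widetilde{y})\|_{0,T,\infty}\leq\|X^{H}(y)-X^{H}(\widetilde{y})\|_{\delta},
\]
so it suffices to estimate the fourth moment of the $\delta$-H\"older norm of the difference.

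Next I would combine Proposition \ref{prop:estim sol} with the elementary inequality $|\sqrt{y}-\sqrt{\widetilde{y}}|^{2}\leq|y-\widetilde{y}|$ (valid for $y,\widetilde{y}\geq 0$), which upon taking the fourth power gives $|\sqrt{y}-\sqrt{\widetilde{y}}|^{4}\leq|y-\widetilde{y}|^{2}$. This yields
\[
\|X^{H}(y)-X^{H}(\widetilde{y})\|_{\delta}^{4}\leq C_{n}^{4}\|\bm{V}\|_{C_{b}^{1}}^{4}\,|y-\widetilde{y}|^{2}\,\Phi(\bm{B}_{H}),
\]
where $\Phi(\bm{B}_{H}):=\|\bm{B}_{H}\|_{0,T,\delta}^{4}\bigl(1+\|\bm{B}_{H}\|_{0,T,\delta}\bigr)^{8/\delta}\exp\bigl(4C_{n}\|\bm{B}_{H}\|_{0,T,\delta}^{1/\delta}\bigr)$. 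The conclusion then reduces to proving $\mathbb{E}[\Phi(\bm{B}_{H})]<\infty$.

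The main obstacle is this integrability claim, since $\Phi$ mixes a polynomial factor with an exponential of a power of the $\delta$-H\"older norm. I would handle it by Cauchy--Schwarz: the polynomial part $\mathbb{E}\bigl[\|\bm{B}_{H}\|_{0,T,\delta}^{8}(1+\|\bm{B}_{H}\|_{0,T,\delta})^{16/\delta}\bigr]$ is finite by Lemma \ref{lem:fernique}(ii), which controls all even moments of $\|\bm{B}_{H}\|_{0,T,\delta}$. For the exponential factor, the key observation is that $\delta>1/2$ implies $1/\delta<2$, so for any $\tau>0$ there exists $K(\tau)$ with
\[
8C_{n}x^{1/\delta}\leq\tau x^{2}+K(\tau),\qquad x\geq 0.
\]
Choosing $\tau$ strictly less than $1/(128(2T)^{2(H-\delta)})$, Lemma \ref{lem:fernique}(i) guarantees that $\mathbb{E}\bigl[\exp\bigl(8C_{n}\|\bm{B}_{H}\|_{0,T,\delta}^{1/\delta}\bigr)\bigr]$ is finite. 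Combining these two bounds produces the required constant $\widetilde{C}_{n}$ depending only on $T,\delta,H,\|V_{0}\|_{C_{b}^{1}}$ and $\|\bm{V}\|_{C_{b}^{2}}$, which establishes the claim.
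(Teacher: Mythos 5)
Your proposal is correct and follows essentially the same route as the paper: plug in the bound from Proposition \ref{prop:estim sol}, use $|\sqrt{y}-\sqrt{\widetilde{y}}|^{4}\leq|y-\widetilde{y}|^{2}$, and control the resulting expectation by dominating $C_{n}\|\bm{B}_{H}\|_{0,T,\delta}^{1/\delta}$ (a sub-quadratic power, since $1/\delta<2$) by $\tau\|\bm{B}_{H}\|_{0,T,\delta}^{2}+K(\tau)$ with $\tau$ small enough for Lemma \ref{lem:fernique}(i), together with the moment bounds of Lemma \ref{lem:fernique}(ii). The only difference is cosmetic: you make the Cauchy--Schwarz separation of the polynomial and exponential factors explicit, whereas the paper leaves it implicit and phrases the domination step as a Young inequality.
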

\begin{proof}
Let $t\in[0,T]$ and $\left|y\right|,\left|\widetilde{y}\right|\leq n$
be fixed. Using the estimate in Proposition \ref{prop:estim sol}
we obtain
\begin{eqnarray*}
 &  & \mathbb{E}\left(\underset{s\leq t}{\sup}\left|X_{t}^{H}(y)-X_{t}^{H}(\widetilde{y})\right|^{4}\right)\\
 & \leq & C_{n}^{4}\left|\sqrt{y}-\sqrt{\widetilde{y}}\right|^{4}\|\bm{V}\|_{C_{b}^{1}}^{4}\\
 &  & \times\mathbb{E}\left(\|\bm{B}_{H}\|_{0,T,\delta}^{4}\left(1+\left\Vert \bm{B}_{H}\right\Vert _{0,T,\delta}\right)^{8/\beta}\exp\left(4C_{n}\|\bm{B}_{H}\|_{0,T,\delta}^{1/\delta}\right)\right).
\end{eqnarray*}
It follows from the assertions (i) and (ii) of Lemma \ref{lem:fernique}
and the following Young inequality
\[
4C_{n}\|\bm{B}_{H}\|_{0,T,\delta}^{1/\delta}\leq\dfrac{2\delta-1}{2\delta}\left(\dfrac{4C_{n}}{\varepsilon}\right)^{2\delta/(2\delta-1)}+\varepsilon^{2\delta}\|\bm{B}_{H}\|_{0,T,\delta}^{2}
\]
 that, for small enough $\varepsilon$, there exist a constant $\widetilde{C}_{n}>0$
depending on $T,\delta,H,\left\Vert V_{0}\right\Vert _{C_{b}^{1}}$
and $\left\Vert \bm{V}\right\Vert _{C_{b}^{2}}$ such that 
\[
\mathbb{E}\left(\underset{s\leq t}{\sup}\left|X_{s}^{H}(y)-X_{s}^{H}(\widetilde{y})\right|^{4}\right)\leq\widetilde{C}_{n}\left|y-\widetilde{y}\right|^{2}.
\]

\end{proof}
The following proposition provides the substitution formulas (\ref{eq:subs1})
and (\ref{eq:subs 2}).
\begin{prop}
Under Hypothesis (\textbf{H.1}) the equalities (\ref{eq:subs1}) and
(\ref{eq:subs 2}) are satisfied.\end{prop}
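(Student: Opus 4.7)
My plan is a density argument. For any simple approximation $Y_\beta^{(N)}=\sum_{k} y_k\,\mathbf{1}_{A_k}$ whose atoms $A_k$ lie in $\sigma(Y_\beta)$---and which therefore remains independent of $\bm{B}_H$---the equalities~(\ref{eq:subs1}) and~(\ref{eq:subs 2}) are tautological: on each $A_k$ one has $X_s^H(Y_\beta^{(N)})\equiv X_s^H(y_k)$, so both sides collapse to $\int_0^{t}V_j(X_s^H(y_k))\,dB_H^j(s)$, respectively to its Lebesgue analogue. The dyadic truncation $Y_\beta^{(N)}:=\min\bigl(2^{-N}\lfloor 2^N Y_\beta\rfloor,\,N\bigr)$ provides such a sequence with $Y_\beta^{(N)}\to Y_\beta$ almost surely, and the whole proof reduces to passing to the limit on both sides of the identity.

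To pass to the limit, I need the $\omega$-wise continuity in $y$ of the two random maps
\[
y\longmapsto F_j(y):=\int_0^{t}V_j(X_s^H(y))\,dB_H^j(s),\qquad y\longmapsto F_0(y):=\int_0^{t}V_0(X_s^H(y))\,ds.
\]
Proposition~\ref{prop:estim sol} furnishes a pathwise estimate $\|X^H(y)-X^H(\widetilde y)\|_\delta\leq K_n(\omega)\,|\sqrt y-\sqrt{\widetilde y}|$ valid for all $|y|,|\widetilde y|\leq n$, where $K_n(\omega)$ is a polynomial-exponential function of the almost surely finite random variable $\|\bm{B}_H\|_{0,T,\delta}$. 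Since $V_j\in C_b^{2}$ is in particular Lipschitz and $2\delta>1$, the Young--Lo\`eve inequality gives a bound of the form
\[
|F_j(y)-F_j(\widetilde y)|\leq c\,\|V_j\|_{C_b^{2}}\bigl(1+\|X^H(y)\|_\delta+\|X^H(\widetilde y)\|_\delta\bigr)\|X^H(y)-X^H(\widetilde y)\|_{\delta}\,\|\bm{B}_H\|_{0,T,\delta},
\]
and combined with the previous estimate this yields pathwise continuity (in fact $1/2$-H\"older regularity) of $y\mapsto F_j(y)$ on each $[0,n]$. For $F_0$ the much simpler estimate $|F_0(y)-F_0(\widetilde y)|\leq T\,\|V_0\|_{C_b^{1}}\sup_s|X_s^H(y)-X_s^H(\widetilde y)|$ does the job.

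Finally, I would execute the passage to the limit by localizing: fixing $n$ and working on $\{Y_\beta\leq n\}$, for every sufficiently large $N$ the variable $Y_\beta^{(N)}$ also lies in $[0,n]$; the identities hold for $Y_\beta^{(N)}$, the left-hand sides converge a.s.\ by the continuity of $F_j,F_0$ just established, and the right-hand sides converge by the same continuity applied through the integrands $V_j(X_s^H(\cdot))$ and $V_0(X_s^H(\cdot))$. Since $Y_\beta<\infty$ a.s., letting $n\to\infty$ sweeps out a full-measure set. I expect the main obstacle to be precisely this localization: the constant $K_n(\omega)$ in Proposition~\ref{prop:estim sol} grows with $n$, so no single global bound in $y$ is available; a closely related subtlety is reading that proposition as holding \emph{simultaneously} for all pairs $(y,\widetilde y)\in[0,n]^2$ on a single full-measure $\omega$-set---only then does the estimate legitimately upgrade to pathwise continuity of the solution map $y\mapsto X^H(\cdot)(y)$, which is what the whole argument rests on.
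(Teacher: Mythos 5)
Your argument is correct, but it takes a genuinely different route from the paper's. The paper discretizes in \emph{time}: it writes both sides of (\ref{eq:subs1}) as limits of Riemann sums $S_n^j(y)$ and $R_n^j$, observes that $S_n^j(Y_\beta)=R_n^j$ tautologically, and then identifies $\lim_n S_n^j(Y_\beta)$ with $\left.\int_0^t V_j(X_s^H(y))\,dB_H^j(s)\right|_{y=Y_\beta}$ by invoking Lemma~3.2.2 of \cite{Nualart2006a} together with the $L^4$ moment estimate $\mathbb{E}|S_n^j(y)-S_n^j(\widetilde y)|^4\le C|y-\widetilde y|^2$, which comes from Proposition~\ref{prop:regular} (itself a consequence of Proposition~\ref{prop:estim sol} plus the Fernique-type Lemma~\ref{lem:fernique}). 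You instead discretize the \emph{parameter}: you approximate $Y_\beta$ by simple random variables, for which the identity is tautological because the Young integral is pathwise, and you pass to the limit using the $\omega$-wise continuity of $y\mapsto F_j(y)$ obtained by feeding the pathwise estimate of Proposition~\ref{prop:estim sol} into the Young--Lo\`eve inequality. Your route is more elementary and self-contained (no appeal to the external substitution lemma, and no Kolmogorov-type moment criterion), it yields almost sure rather than in-probability convergence, and it makes explicit what the left-hand side of (\ref{eq:subs1}) means, namely the continuous-in-$y$ version of $F_j$; the paper's route, by contrast, only needs moment bounds in $y$ and so would survive in situations where a pathwise Lipschitz estimate is unavailable. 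Two small remarks: the independence of $Y_\beta^{(N)}$ from $\bm{B}_H$ plays no role in your tautological step, since the Young integral is defined path by path (independence is what the paper's Lemma~3.2.2 argument trades on); and the localization subtlety you flag at the end --- reading Proposition~\ref{prop:estim sol} as holding simultaneously for all $(y,\widetilde y)\in[0,n]^2$ on a single full-measure set --- is indeed the point on which both proofs ultimately rest, and it is legitimate here because the estimate is deterministic given the path of $\bm{B}_H$.
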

\begin{proof}
First let's recall that, for $j=1,\ldots,d$ and any $y>0$, the Young
integrals 
\begin{equation}
\int_{0}^{T}V_{j}(X_{s}^{H}(y))\, dB_{H}^{j}(s)\label{eq:Young1}
\end{equation}
and 
\begin{equation}
\int_{0}^{T}V_{j}(X_{s}^{H}(Y_{\beta}))\, dB_{H}^{j}(s)\label{eq:Young2}
\end{equation}
 exist. Indeed, if $1-H<\lambda<\frac{1}{2}$, then for each $y>0$,
the SDE (\ref{eq:2}) has a strong $\left(1-\lambda\right)$-Hölder
continuous solution $X_{\cdotp}^{H}(y)$. Therefore, the process $X_{.}^{H}(Y_{\beta})$
has $\left(1-\lambda\right)$-Hölder continuous paths. Then the existence
of the preceding integrals follows from the Lipschitz condition of
$V_{j}$ and the Hölder continuity of the paths of $B_{H}^{j}$. As
a consequence, for any subdivision $\left(t_{k}^{n}\right)_{k=0,\ldots,n-1}$
of $[0,T]$, whose mesh tends to $0$ as $n$ goes to $\infty$, and
each $y\geq0$, the Riemann sums
\[
S_{n}^{j}\left(y\right)=\sum_{k=0}^{n-1}V_{j}(X_{t_{k}^{n}}^{H}(y))\left(B_{H}^{j}(t_{k+1}^{n})-B_{H}^{j}(t_{k}^{n})\right)
\]
and 
\[
R_{n}^{j}=\sum_{k=0}^{n-1}V_{j}(X_{t_{k}^{n}}^{H}(Y_{\beta}))\left(B_{H}^{j}(t_{k+1}^{n})-B_{H}^{j}(t_{k}^{n})\right)
\]
converge to (\ref{eq:Young1}) and (\ref{eq:Young2}), respectively.
Now to prove (\ref{eq:subs1}), it suffices to show that $S_{n}^{j}(Y_{\beta})=R_{n}^{j}$,
converge, as $n$ goes to $\infty$, to 
\[
\int_{0}^{T}V_{j}(X_{s}^{H}(y))dB_{H}^{j}(s)\bigg|_{y=Y_{\beta}}.
\]
Taking into account that the fBm with Hurst parameter $H$ has locally
bounded $p$-variation for $p>1/H$ and the regularity of the solution
$X_{t}^{H}(y)$ with respect to $y$, cf.~Proposition \ref{prop:regular},
then the above mentioned convergence follows from Lemma 3.2.2 in Nualart
and the following estimate,
\begin{eqnarray*}
\mathbb{E}\left|S_{n}^{j}\left(y\right)-S_{n}^{j}\left(\widetilde{y}\right)\right|^{4} & = & \mathbb{E}\left|\sum_{k=0}^{n-1}\left(V_{j}(X_{t_{k}^{n}}^{H}(y))-V_{j}(X_{t_{k}^{n}}^{H}(\widetilde{y}))\right)\left(B_{H}^{j}(t_{k+1}^{n})-B_{H}^{j}(t_{k}^{n})\right)\right|^{4}\\
 & \leq & C\left|y-\widetilde{y}\right|^{2}
\end{eqnarray*}
for all $\left|y\right|,\left|\widetilde{y}\right|\leq n$. The equality
(\ref{eq:subs 2}) is easy to prove.
\end{proof}
The main result of this section is the following theorem. 
\begin{thm}
\label{thm:main}The process $\left\{ X_{t}^{H}(Y_{\beta}),t\in\left[0,T\right]\right\} $
satisfies the SDE (\ref{eq:wrep}).\end{thm}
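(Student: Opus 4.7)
The plan is to read the SDE (\ref{eq:2}) as an almost-sure identity depending on the deterministic parameter $y>0$, substitute $y=Y_\beta$ on both sides, and invoke the substitution formulas (\ref{eq:subs1}) and (\ref{eq:subs 2}) from the previous proposition to identify the resulting Young and Lebesgue integrals. Since the pathwise Young integral is linear in scalar multiples of its integrator, $\sqrt{Y_\beta}\int_0^t V_j(X_s^H(Y_\beta))\,dB_H^j(s)=\int_0^t V_j(X_s^H(Y_\beta))\,d(\sqrt{Y_\beta}B_H^j)(s)$ pointwise in $\omega$, so the equation (\ref{eq:eq multip}) produced in this way is exactly (\ref{eq:wrep}).

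Concretely, I would proceed in three steps. First, I would promote $X_\cdot^H(y)$ to a jointly measurable process that is continuous in $y$ on compact subsets of $(0,\infty)$: Proposition \ref{prop:regular} supplies the Kolmogorov-type bound $\mathbb{E}\sup_{s\le t}|X_s^H(y)-X_s^H(\widetilde y)|^4\le \widetilde C_n|y-\widetilde y|^2$ for $|y|,|\widetilde y|\le n$, from which Kolmogorov's criterion yields a continuous modification in $y$. Together with the independence of $\bm{B}_H$ and $Y_\beta$ in the representation (\ref{eq:gBm_rep_ndim}), this makes $X_t^H(Y_\beta)$ a well-defined random variable. Second, I would upgrade the identity (\ref{eq:2}), a priori valid for each fixed $y$ outside a $y$-dependent null set, to a single full-measure event on which (\ref{eq:2}) holds for every $y>0$ simultaneously; this is obtained by fixing a countable dense $D\subset(0,\infty)$ and extending by continuity in $y$, using the continuous modification on the left-hand side together with the fact that, under Hypothesis (\textbf{H.1}), $y\mapsto V_j(X_\cdot^H(y))$ is continuous with values in a space of Hölder functions, so that the Young integral against the fixed $\delta$-Hölder integrator $B_H^j$ ($\delta>1/2$) depends continuously on $y$ as well. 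Third, since $Y_\beta>0$ almost surely, I would substitute $y=Y_\beta(\omega)$ $\omega$-wise into this identity and replace the resulting expressions via (\ref{eq:subs1}) and (\ref{eq:subs 2}).

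The only genuinely delicate point is the second step, i.e.\ securing one null set outside of which (\ref{eq:2}) holds for \emph{all} $y>0$, so that evaluation at the random argument $y=Y_\beta(\omega)$ is legitimate. Once that is in place the theorem follows immediately: applying (\ref{eq:subs 2}) to the drift and (\ref{eq:subs1}) to each of the $d$ noise terms, and reading $\sqrt{y}|_{y=Y_\beta}=\sqrt{Y_\beta}$, produces (\ref{eq:eq multip}), which is equivalent to (\ref{eq:wrep}) by the linearity of the pathwise Young integral with respect to constant rescaling of the integrator.
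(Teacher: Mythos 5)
Your proposal is correct and follows essentially the same route as the paper: a continuous-in-$y$ modification via Proposition \ref{prop:regular} and Kolmogorov's criterion, followed by pathwise substitution of $y=Y_{\beta}(\omega)$ into (\ref{eq:2}) using the equalities (\ref{eq:subs1}) and (\ref{eq:subs 2}). You are in fact more explicit than the paper about the one delicate point --- securing a single null set off which (\ref{eq:2}) holds for all $y>0$ simultaneously via a countable dense set and continuity in $y$ --- which the paper's proof passes over in silence.
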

\begin{proof}
It follows from the classical Kolmogorov criterion that, for each
$t\in\left[0,T\right]$, there exists a modification of the process
$\left\{ X_{t}^{H}(y),y\geq0\right\} $ that is a continuous process
whose paths are $\gamma$-Hölder for every $\gamma\in[0,\frac{1}{4})$.
Now using the equalities (\ref{eq:subs1}) and (\ref{eq:subs 2})
we obtain that the process $\left\{ X_{t}^{H}(Y_{\beta}),t\in\left[0,T\right]\right\} $
satisfies the SDE (\ref{eq:wrep}) by substituting $y=Y_{\beta}(w)$
in the SDE (\ref{eq:2}). This completes the proof.
\end{proof}

\section{Applications}

\label{sec:appl}

As an application of the SF obtained in the previous section we first
deduce, under suitable non degeneracy condition on the vector field
$\bm{V}$, the absolute continuity (with respect to the Lebesgue measure
on $\mathbb{R}^{n}$) of the law of the solution $X_{t}^{H}(Y_{\beta})$
at any time $t>0$. Secondly we give sufficient conditions for the
strict positivity of the density, cf.~Subsection~\ref{sub:abs_cont}.
Finally we derive a Gaussian mixture upper bound for the density and
its smoothness in Subsections \ref{sub:upper_bound} and \ref{sub:smooth}.

\subsection{Absolute continuity}

\label{sub:abs_cont}

In order to investigate the absolute continuity of the law of $X_{t}^{H}(Y_{\beta})$
on $\mathbb{R}^{n}$ and the strict positivity of the density we assume:
\begin{description}
\item [{(H.2)}] The vector fields $V_{0},\ldots,V_{d}$ are $C_{b}^{\infty}$.
\item [{(H.3)}] For every $x\in\mathbb{R}^{n}$ and every non vanishing
$\lambda\in\mathbb{R}^{d}$, the vector space spanned by $\{V_{j}(x),[V_{j},Z],\,1\leq j\leq d\}$
is $\mathbb{R}^{n}$, where $Z$ is given by $Z=\sum_{j=1}^{d}\lambda_{j}V_{j}$.\end{description}
\begin{prop}
Assume that Hypotheses \textup{(}\textbf{\textup{H.2}}\textup{)} and
\textup{(}\textbf{\textup{H.3}}\textup{)} hold. Then for any $t\in(0,T]$,
we have:
\begin{enumerate}
\item The law of the solution $X_{t}^{H}(Y_{\beta})$ of the SDE (\ref{eq:wrep})
has a density $p_{X_{t}^{H}(Y_{\beta})}$ with respect to the Lebesgue
measure on $\mathbb{R}^{n}$. 
\item The density $p_{X_{t}^{H}(Y_{\beta})}$ is strictly positive, that
is $p_{X_{t}^{H}(Y_{\beta})}(z)>0$ for all $z\in\mathbb{R}^{n}$. 
\end{enumerate}
\end{prop}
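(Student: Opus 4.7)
The overall plan is to reduce both claims to the corresponding properties of $X_t^H(y)$ at a fixed parameter $y>0$, for which we can invoke the existing literature on SDEs driven by fBm, and then recombine via the substitution formula proved in Theorem~\ref{thm:main}. The backbone is the integral representation
\[
p_{X_t^H(Y_\beta)}(z)=\int_0^{+\infty}p_{X_t^H(y)}(z)\,p_{Y_\beta}(y)\,dy
\]
already highlighted in the introduction.

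First I would fix $t\in(0,T]$ and $y>0$ and view~(\ref{eq:2}) as a Young SDE on $\mathbb{R}^n$ driven by the fBm $\bm{B}_H$ with smooth bounded diffusion vector fields $\sqrt{y}\,V_1,\ldots,\sqrt{y}\,V_d$ and drift $V_0$. Under~(\textbf{H.2}) all these vector fields lie in $C_b^\infty$, and since $[\sqrt{y}V_j,\sum_k\lambda_k\sqrt{y}V_k]=y\,[V_j,Z]$, the Hörmander-type span condition~(\textbf{H.3}) is invariant under the $\sqrt{y}$-rescaling of the diffusion vector fields. Consequently, the results of~\cite{Baudoin:2014tw,Nualart2009} give a smooth density $p_{X_t^H(y)}$ for $X_t^H(y)$, while the Baudoin--Ouyang--Tindel theorem~\cite{BOT14} yields its strict positivity, i.e.\ $p_{X_t^H(y)}(z)>0$ for every $z\in\mathbb{R}^n$.

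Next I would exploit Theorem~\ref{thm:main} together with the independence of $\bm{B}_H$ from $Y_\beta$ (and hence the independence of the parametrized family $\{X_t^H(y):y>0\}$ from $Y_\beta$) to test the law of $X_t^H(Y_\beta)$ against an arbitrary bounded Borel $\varphi:\mathbb{R}^n\to\mathbb{R}$. Conditioning on $Y_\beta$ and swapping the order of integration yields
\[
\mathbb{E}\bigl(\varphi(X_t^H(Y_\beta))\bigr)=\int_0^{+\infty}\mathbb{E}\bigl(\varphi(X_t^H(y))\bigr)\,p_{Y_\beta}(y)\,dy=\int_{\mathbb{R}^n}\varphi(z)\left(\int_0^{+\infty}p_{X_t^H(y)}(z)\,p_{Y_\beta}(y)\,dy\right)dz,
\]
where Fubini is justified by the joint measurability of $(y,\omega)\mapsto X_t^H(y,\omega)$ furnished by the continuous modification built in the proof of Theorem~\ref{thm:main}. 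Reading off the density then proves part~(1).

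For part~(2), strict positivity follows at once from the representation: the integrand is nonnegative, $p_{X_t^H(y)}(z)>0$ for every $y>0$ by the first step, and the $M$-Wright density $p_{Y_\beta}=M_\beta$ is strictly positive on $(0,+\infty)$ (a standard property, visible either from its series expansion or from the complete monotonicity of $E_\beta(-s)$ in~(\ref{eq:M_wright})). Hence the integral is strictly positive at every $z\in\mathbb{R}^n$. The main subtlety of the whole scheme, and the step most worth spelling out carefully, is ensuring that the hypotheses of the cited fBm-density results continue to hold for each $y>0$ once the diffusion vectors have been multiplied by $\sqrt{y}$; the invariance of~(\textbf{H.3}) noted above is precisely what makes this technical point harmless.
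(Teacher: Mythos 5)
Your proof is correct and follows essentially the same route as the paper: reduce to the fixed-$y$ results on $p_{X_t^H(y)}$ from the fBm literature, then use the substitution formula and the independence of $\{X_t^H(y),\,y>0\}$ from $Y_\beta$ to obtain the mixture representation $p_{X_t^H(Y_\beta)}(z)=\int_0^\infty p_{X_t^H(y)}(z)M_\beta(y)\,dy$ and read off absolute continuity and positivity. The only discrepancy is bibliographic: the paper cites Baudoin--Hairer \cite{Baudoin2007} for existence and smoothness of $p_{X_t^H(y)}$ and Baudoin et al.~\cite{Baudoin:2014tw} for its strict positivity, whereas you attribute positivity to \cite{BOT14} (which concerns upper bounds); the mathematical substance, including your helpful explicit check that (\textbf{H.3}) is invariant under the $\sqrt{y}$-rescaling of the diffusion vector fields, is unaffected.
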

\begin{proof}
1. It follows from Theorem $4.3$ in Baudoin and Hairer \cite{Baudoin2007}
that, for any $y>0$ and $t\in(0,T]$, the law of the the solution
$X_{t}^{H}(y)$ of the SDE (\ref{eq:2}) has a smooth density $p_{X_{t}^{H}(y)}$
with respect to the Lebesgue measure on $\mathbb{R}^{n}$. Since $\{\bm{B}_{H}(t),0\leq t\leq T\}$
and $Y_{\beta}$ are independent, then $\{X_{t}^{H}(y),0\leq t\leq T,y>0\}$
and $Y_{\beta}$ are also independent. Now it is easy to see that
the density function of $X_{t}^{H}(Y_{\beta})$ is given by 
\begin{equation}
p_{X_{t}^{H}(Y_{\beta})}(z)=\int_{0}^{\infty}p_{X_{t}^{H}(y)}(z)M_{\beta}(y)\, dy,\, z\in\mathbb{R}^{n}.\label{eq:density-1}
\end{equation}
2. Let $t\in(0,T]$ be given. It is follows from Baudoin et al.~\cite{Baudoin:2014tw}
that, under Hypotheses (\textbf{H.2}) and (\textbf{H.3}), for any
$y>0$ the density $p_{X_{t}^{H}(y)}$ of the the solution $X_{t}^{H}(y)$
of the SDE (\ref{eq:2}) fulfills $p_{X_{t}^{H}(y)}(z)>0$, for all
$z\in\mathbb{R}^{n}$. Then for any $z\in\mathbb{R}^{n}$ we have
$p_{X_{t}^{H}(y)}(z)>0$ for all $y>0$. It follows that the density
(\ref{eq:density-1}) $p_{X_{t}^{H}(Y_{\beta})}(z)>0$ for all $z\in\mathbb{R}^{n}$.\end{proof}
\begin{rem}
The absolute continuity of the law of $X_{t}^{H}(Y_{\beta})$ may
be obtained using Theorem~8 in Nualart and Saussereau \cite{Nualart2009}
under weaker regularity conditions on $V_{j}$, $0\leq j\leq d$.
Namely, $V_{j}\in C_{b}^{3}$, $0\leq j\leq d$ and the following
non degeneracy hypothesis:
\begin{description}
\item [{(H.4)}] For every $x\in\mathbb{R}^{n}$, the vector space spanned
by $V_{1}(x),\ldots,V_{d}(x)$ is $\mathbb{R}^{n}$. 
\end{description}
\end{rem}

\subsection{Upper bound of the density}

\label{sub:upper_bound}

First of all, we recall the result of Baudoin \cite{BOT14} on the
global Gaussian upper bound for the density function $p_{X_{t}^{H}(y)}$
of the solution $X_{t}^{H}(y)$, for $y>0$. Moreover, we highlight
the dependence of $p_{X_{t}^{H}(y)}$ with respect to $y$. For this
we need to assume the same assumptions and also keep the same notation
as in the original work \cite{BOT14}. We suppose that our vector
fields $V_{1},\ldots,V_{d}$ fulfill the following antisymmetric hypothesis:
\begin{description}
\item [{(H.5)}] There exist smooth and bounded functions $\omega_{i,j}^{k}$
such that: 
\end{description}
\[
\left[V_{i},V_{j}\right]=\sum_{k=1}^{d}\omega_{i,j}^{k}V_{k}\quad\text{and}\quad\omega_{i,j}^{k}=-\omega_{i,k}^{j},\; i,j=1,\ldots,d.
\]

The following theorem is an adaptation of Theorem~1.3 in \cite{BOT14}
for the equation (\ref{eq:2}). 
\begin{thm}
Assume that Hypotheses (\textbf{H.2}), \textup{(}\textbf{\textup{H.4}}\textup{)}
and \textup{(}\textbf{\textup{H.5}}\textup{) }are satisfied. Then,
for $t\in\left(0,T\right]$, the random variable $X_{t}^{H}(y)$ admits
a smooth density $p_{X_{t}^{H}(y)}$. Furthermore, there exist $3$
positive constants $c_{t}^{(1)}(y),c_{t}^{(2)}(y),c_{t}^{(3)}(y)$
such that
\[
p_{X_{t}^{H}(y)}(z)\leq c_{t}^{(1)}(y)\exp\left(-c_{t}^{(3)}(y)\left(\left|z\right|-c_{t}^{(2)}(y)\right)^{2}\right)
\]
for any $z\in\mathbb{R}^{n}$.
\end{thm}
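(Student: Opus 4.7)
The plan is to reduce the SDE (\ref{eq:2}) to the exact setting of Baudoin \cite{BOT14} by absorbing the scalar factor $\sqrt{y}$ into the diffusion vector fields. Setting $\widetilde{V}_j := \sqrt{y}\, V_j$ for $j=1,\ldots,d$, equation (\ref{eq:2}) rewrites as a Young SDE
\[
X_t^H(y) = x_0 + \sum_{j=1}^d \int_0^t \widetilde{V}_j(X_s^H(y))\, dB_H^j(s) + \int_0^t V_0(X_s^H(y))\, ds
\]
driven by the standard $d$-dimensional fBm $\bm{B}_H$, which is precisely the form covered by Theorem~1.3 of \cite{BOT14}. It should then suffice to apply that theorem to the triple $(V_0,\widetilde{V}_1,\ldots,\widetilde{V}_d)$; the advertised dependence of the constants on $y$ will appear automatically through the rescaling.

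The first step is to verify that for every fixed $y>0$ the rescaled system still satisfies (\textbf{H.2}), (\textbf{H.4}) and (\textbf{H.5}). Assumption (\textbf{H.2}) is immediate, because multiplying a $C_b^\infty$ vector field by the positive scalar $\sqrt{y}$ keeps it in $C_b^\infty$. Assumption (\textbf{H.4}) is preserved since $\{V_j(x)\}_{j=1}^d$ and $\{\widetilde{V}_j(x)\}_{j=1}^d$ clearly span the same subspace of $\mathbb{R}^n$. For (\textbf{H.5}), the bilinearity of the Lie bracket yields
\[
[\widetilde{V}_i,\widetilde{V}_j] = y\,[V_i,V_j] = \sum_{k=1}^d y\,\omega_{i,j}^k\, V_k = \sum_{k=1}^d \sqrt{y}\,\omega_{i,j}^k\, \widetilde{V}_k,
\]
so that the new structural coefficients $\widetilde{\omega}_{i,j}^k := \sqrt{y}\,\omega_{i,j}^k$ remain smooth and bounded and inherit the required antisymmetry $\widetilde{\omega}_{i,j}^k = -\widetilde{\omega}_{i,k}^j$.

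Once these three hypotheses are in place, Theorem~1.3 of \cite{BOT14} applied to $(V_0,\widetilde{V}_1,\ldots,\widetilde{V}_d)$ directly delivers both the smoothness of $p_{X_t^H(y)}$ and a bound of the announced form
\[
p_{X_t^H(y)}(z) \leq c_t^{(1)} \exp\bigl(-c_t^{(3)}(|z|-c_t^{(2)})^2\bigr),
\]
in which the three constants depend only on $t$, $T$, the $C_b^k$-norms of the rescaled $\widetilde{V}_j$, and the functions $\widetilde{\omega}_{i,j}^k$. The explicit dependence $c_t^{(i)}(y)$ then follows by inspection, after substituting $\widetilde{V}_j = \sqrt{y}\,V_j$ and $\widetilde{\omega}_{i,j}^k = \sqrt{y}\,\omega_{i,j}^k$ into those expressions.

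The main obstacle I expect is the bookkeeping required to make the $y$-dependence of the constants transparent: one must track how every $C_b^k$-norm and every bracket coefficient in the constants from \cite{BOT14} rescales by a power of $\sqrt{y}$, and verify that no denominator vanishes for $y>0$. Since for each fixed $y>0$ the rescaled problem remains uniformly non-degenerate in the sense of (\textbf{H.4}) and (\textbf{H.5}), no pathology appears and the $c_t^{(i)}(y)$ are well-defined finite positive functions of $y$.
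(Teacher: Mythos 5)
Your proposal is correct and follows essentially the same route as the paper, which presents this theorem precisely as an adaptation of Theorem~1.3 of \cite{BOT14} to equation (\ref{eq:2}) obtained by absorbing the factor $\sqrt{y}$ into the diffusion vector fields; your explicit check that (\textbf{H.2}), (\textbf{H.4}) and (\textbf{H.5}) survive the rescaling $\widetilde{V}_j=\sqrt{y}\,V_j$, $\widetilde{\omega}_{i,j}^k=\sqrt{y}\,\omega_{i,j}^k$ is a detail the paper leaves implicit. The only difference is that the paper goes on to track the $y$-dependence of the constants explicitly through the intermediate estimates of \cite{BOT14} (which it needs later for integrability against $M_\beta$), whereas you defer that bookkeeping; for the statement as given, your argument suffices.
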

We would like to emphasize the dependence of the constants $c_{t}^{(1)}(y)$,
$c_{t}^{(2)}(y)$, $c_{t}^{(3)}(y)$ with respect to $y$. For that
we need a careful reading of the the proof of Theorem~1.3 in \cite{BOT14}
taking into account the dependence with respect to $y.$ In a first
step we look for the constants $c_{t}^{(2)}(y),c_{t}^{(3)}(y)$. It
should be noted that these two constants come from the tail estimate
of the solution $\mathbb{P}[X_{t}^{H}(y)>z]$. It follows from Proposition~2.2
in \cite{BOT14} (see also Hu and Nualart \cite{Hu2007}) that there
exist a constant $C>0$ depending on $V_{0}$, $\bm{V}$, $k$ and
$x_{0}$, such that 

\begin{equation}
\underset{0\leq t\leq T}{\sup}\left|X_{t}^{H}(y)\right|\leq\left|x_{0}\right|+y^{1/(2\delta)}CT\left\Vert \bm{B}_{H}\right\Vert _{0,T,\delta}^{1/\delta}\label{eq:1 Nov 6}
\end{equation}
\begin{equation}
\underset{0\leq t\leq T}{\sup}\left\Vert \gamma_{X_{t}^{H}(y)}^{-1}\right\Vert \leq\dfrac{C}{T^{2Hd}}\left[1+\exp\left(y^{1/(2\delta)}CT\left\Vert \bm{B}_{H}\right\Vert _{0,T,\delta}^{1/\delta}\right)\right]\label{eq:2 Nov 6}
\end{equation}
 
\begin{equation}
\underset{0\leq t,r_{i}\leq T}{\sup}\left|\mathbf{D}_{r_{k}}^{j_{k}}\ldots\mathbf{D}_{r_{1}}^{j_{1}}X_{t}^{H}(y)\right|\leq C\exp\left(y^{1/(2\delta)}CT\left\Vert \bm{B}_{H}\right\Vert _{0,T,\delta}^{1/\delta}\right)\label{eq:3 Nov 6}
\end{equation}
where $\mathbf{D}$ and $\gamma_{X_{t}^{H}}$ denote the Malliavin
derivative and the Malliavin matrix of $X_{t}^{H}(y)$, respectively.

On the other hand, we obtain from Theorem~3.1 in \cite{BOT14} the
following deterministic bound of the Malliavin derivative of the solution
$X_{t}^{H}(y)$, almost surely
\[
\|\mathbf{D}X_{t}^{H}(y)\|_{\infty}\leq My\exp(\theta t),\quad y>0,
\]
 where the constant $\theta$ linearly depend on $V_{0}$ and 
\[
M=\sup_{x\in\mathbb{R}^{n}}\,\sup_{\|\lambda\|\leq1}\left|\sum_{j=1}^{d}\lambda_{j}V_{j}(x)\right|^{2}.
\]
Now using the concentration property and the inequality (\ref{eq:1 Nov 6})
we obtain
\begin{equation}
\mathbb{P}[X_{t}(y)>z]\leq\exp\left(-c_{t}^{(3)}(y)\big(\left|z\right|-c_{t}^{(2)}(y)\big)^{2}\right)\label{eq:4 Nov 6}
\end{equation}
 where 
\[
c_{t}^{(2)}(y)=\sqrt{d}\,\mathbb{E}\left(\underset{i=1,\ldots,n}{\max}\left(\left|X_{t}^{H,i}(y)\right|\right)\right)
\]
 and 
\[
c_{t}^{(3)}(y)=\dfrac{1}{2dM^{2}e^{2\theta t}t^{2H}y^{2}}.
\]
For the constant $c_{t}^{(1)}(y)$, it derived from the norms of the
Malliavin derivative and the Malliavin matrix of $X_{t}^{H}(y)$.
Indeed, using the inequalities (\ref{eq:1 Nov 6})-(\ref{eq:3 Nov 6})
and the tail estimate (\ref{eq:tail-estimate_bh}), Theorem 3.14 in
\cite{BOT14} gives us the following Gaussian upper bound of the density
$p_{X_{t}^{H}(y)}$, for $y>0$,
\begin{equation}
p_{X_{t}^{H}(y)}(z)\leq c_{t}^{(1)}(y)\exp\left(-c_{t}^{(3)}(y)\left(\left|z\right|-c_{t}^{(2)}(y)\right)^{2}\right),\label{eq:5 Nov 6}
\end{equation}
where the constant $c_{t}^{(1)}(y)$ is given by 
\[
c_{t}^{(1)}(y)=\left\Vert \det\gamma_{X_{t}(y)}^{-1}\right\Vert _{L^{p}}^{m}\left\Vert \mathbf{D}X_{t}(y)\right\Vert _{k,p'}^{m'}
\]
for some constants $p,p'>1$ and integers $m,m'$. Let us note that
for $\tau<1/(128(2T)^{2(H-\delta)})$, $c_{t}^{(1)}(y)$ satisfy
\begin{eqnarray}
c_{t}^{(1)}(y) & \leq & C\left(1+t^{m/p}F(t,\frac{1}{\delta}-1,\frac{1}{\delta},Cy^{1/2\delta})^{m/p}\right)\nonumber \\
 &  & \times\left(1+t^{nH(k+1)+\frac{m'}{p'}}F(t,\frac{1}{\delta}-1,\frac{1}{\delta},Cy^{1/2\delta})^{m'/p'}\right)\label{eq:6 Nov 6}
\end{eqnarray}
 where 
\[
F\left(t,\frac{1}{\delta}-1,\frac{1}{\delta},Cy^{1/(2\delta)}\right):=\int_{0}^{+\infty}u^{(1/\delta)-1}\exp\left(-\tau u^{2}\right)\exp\left(Cty^{1/(2\delta)}u^{1/\delta}\right)du.
\]

Now we are ready to give the upper bound of the density $p_{X_{t}^{H}(Y_{\beta})}$.
\begin{prop}
Assume that Hypotheses (\textbf{H.2}), (\textbf{H.4}) and (\textbf{H.5})
are satisfied. Then for $t\in\left(0,T\right]$, the density $p_{X_{t}^{H}(Y_{\beta})}$
satisfies the following Gaussian mixture type upper bound, for all
$z\in\mathbb{R}^{n}$ 
\begin{equation}
p_{X_{t}^{H}(Y_{\beta})}(z)\leq\int_{0}^{\infty}\rho_{H}(z,y)M_{\beta}(y)\, dy,\label{eq:8 Nov 6}
\end{equation}
where 
\begin{equation}
\rho_{H}(z,y):=c_{t}^{(1)}(y)\exp\left(-c_{t}^{(3)}(y)\left(\left|z\right|-c_{t}^{(2)}(y)\right)^{2}\right).\label{eq:density1}
\end{equation}
\end{prop}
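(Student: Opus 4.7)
The plan is to combine the mixture representation of the density of $X_{t}^{H}(Y_{\beta})$ with the pointwise Gaussian bound on $p_{X_{t}^{H}(y)}$ that has already been extracted from the adapted version of Theorem~1.3 of \cite{BOT14}. Concretely, by Theorem~\ref{thm:main} the random variable $X_{t}^{H}(Y_{\beta})$ is the value at time $t$ of the solution of the SDE (\ref{eq:wrep}), and since the family $\{X_{t}^{H}(y):\ t\in[0,T],\ y>0\}$ is measurable with respect to $\bm{B}_{H}$ only, it is independent of $Y_{\beta}$. Conditioning on $Y_{\beta}$, whose law has density $M_{\beta}$, gives the representation
\begin{equation*}
p_{X_{t}^{H}(Y_{\beta})}(z)=\int_{0}^{\infty}p_{X_{t}^{H}(y)}(z)\,M_{\beta}(y)\,dy,\qquad z\in\mathbb{R}^{n},
\end{equation*}
already recorded as (\ref{eq:density-1}), the pointwise identity being valid because under (\textbf{H.2}) each density $p_{X_{t}^{H}(y)}$ is smooth.

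Next, for every fixed $y>0$ the adapted form of Baudoin's theorem applies under the present assumptions (\textbf{H.2}), (\textbf{H.4}), (\textbf{H.5}), since these are exactly the hypotheses that yield the Malliavin-matrix bounds (\ref{eq:1 Nov 6})--(\ref{eq:3 Nov 6}) and hence the Gaussian-type inequality (\ref{eq:5 Nov 6}):
\begin{equation*}
p_{X_{t}^{H}(y)}(z)\leq c_{t}^{(1)}(y)\exp\!\left(-c_{t}^{(3)}(y)\bigl(|z|-c_{t}^{(2)}(y)\bigr)^{2}\right)=\rho_{H}(z,y).
\end{equation*}
Substituting this non-negative pointwise bound into the mixture representation and invoking monotonicity of the Lebesgue integral yields the claimed estimate (\ref{eq:8 Nov 6}) at every $z\in\mathbb{R}^{n}$.

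There is in fact no real obstacle: the genuine work has already been performed upstream, namely tracking the explicit $y$-dependence of the constants $c_{t}^{(1)}(y),c_{t}^{(2)}(y),c_{t}^{(3)}(y)$ through the proof of Theorem~1.3 in \cite{BOT14} and recording the growth of $c_{t}^{(1)}(y)$ via (\ref{eq:6 Nov 6}). The only mild point of care is to use the pointwise (not merely almost everywhere) version of the bound (\ref{eq:5 Nov 6}), which is legitimate because both $p_{X_{t}^{H}(y)}(\cdot)$ and $\rho_{H}(\cdot,y)$ are continuous in $z$. Note also that one need not verify finiteness of the right-hand side of (\ref{eq:8 Nov 6}) in order to assert the inequality: if the integral diverges the statement is trivially true, and if it converges it provides a genuine quantitative bound.
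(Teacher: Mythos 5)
Your overall route --- the mixture representation (\ref{eq:density-1}) combined with the pointwise Gaussian bound (\ref{eq:5 Nov 6}) for each fixed $y>0$, then monotonicity of the integral --- is exactly the paper's route, and as far as the literal inequality (\ref{eq:8 Nov 6}) is concerned your argument is complete. The difference is in what you choose to discharge at the end: you explicitly declare that finiteness of the right-hand side need not be checked, whereas the paper's entire written proof consists of precisely that check. The authors regard the well-definedness of $\int_{0}^{\infty}\rho_{H}(z,y)M_{\beta}(y)\,dy$ as part of the content of the proposition (otherwise the ``Gaussian mixture type upper bound'' could be identically $+\infty$ and say nothing), and this is not automatic: the constant $c_{t}^{(1)}(y)$ grows with $y$ through powers of
\[
F\left(t,\tfrac{1}{\delta}-1,\tfrac{1}{\delta},Cy^{1/(2\delta)}\right)=\int_{0}^{+\infty}u^{(1/\delta)-1}\exp\left(-\tau u^{2}\right)\exp\left(Cty^{1/(2\delta)}u^{1/\delta}\right)du,
\]
as recorded in (\ref{eq:6 Nov 6}), so one has to beat this growth with the decay of $M_{\beta}$. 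The paper does this by invoking the asymptotics (\ref{eq:7 Nov 6}) of $M_{\beta}$ at infinity (a stretched exponential with exponent $1/(1-\beta)>1$) and the observation that $\tfrac{1}{2\delta}<1<\tfrac{1}{1-\beta}$, which makes $\int_{0}^{\infty}F(\cdot,Cy^{1/(2\delta)})^{p}M_{\beta}(y)\,dy<\infty$ for every $p>0$ and hence $\int_{0}^{\infty}c_{t}^{(1)}(y)M_{\beta}(y)\,dy<\infty$. You gesture at the $y$-dependence of the constants as ``work already performed upstream,'' but you never actually pair it with the decay of $M_{\beta}$; that pairing is the one nontrivial step here, and your proof should include it (or at least state the finiteness as a separate claim with this justification) to match what the proposition is really asserting.
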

\begin{proof}
First we point out the asymptotic behavior of the function $M_{\beta}(y)$
when $y$ goes to $\infty$, see Eq.~(4.5) in \cite{Mainardi_Mura_Pagnini_2010}:
\begin{equation}
M_{\beta}(y/\beta)\sim\dfrac{1}{\sqrt{2\pi\left(1-\beta\right)}}y^{\left(\beta-1/2\right)/\left(1-\beta\right)}\exp\left(-\dfrac{1-\beta}{\beta}y^{1/(1-\beta)}\right).\label{eq:7 Nov 6}
\end{equation}
With this and the fact that $\frac{1}{2\delta}<1<\frac{1}{1-\beta}$
we see that, for any $p>0$, the integral 
\[
\int_{0}^{+\infty}\left(F\left(t,\frac{1}{\delta}-1,\frac{1}{\delta},Cy^{1/(2\delta)}\right)\right)^{p}M_{\beta}(y)\, dy
\]
 is finite. This allows us to conclude that the integral $\int_{0}^{\infty}\rho_{H}(z,y)M_{\beta}(y)\, dy$
is well defined and as a consequence the density function $p_{X_{t}^{H}(Y_{\beta})}$
satisfy the Gaussian mixture type upper bound (\ref{eq:8 Nov 6}).
\end{proof}

\subsection{Smoothness of the density}

\label{sub:smooth}

To show the smoothness of the density $p_{X_{t}^{H}(Y_{\beta})}$
we use the differentiation under the integral sign in representation
(\ref{eq:density-1}). Since $p_{X_{t}^{H}(y)}$ is smooth for any
$y>0$, then it is sufficient to obtain an upper bound of $|\partial_{\kappa}p_{X_{t}^{H}(y)}|\leq h_{\kappa}(y)$,
for any multi-index $\kappa$, such that 
\[
\int_{0}^{\infty}h_{\kappa}(y)M_{\beta}(y)\, dy<\infty.
\]
It follows from the proof of Proposition~2.1.5 in \cite{Nualart2006a}
that 
\[
|\partial_{\kappa}p_{X_{t}^{H}(y)}(z)|\leq c_{t,\kappa}^{(1)}(y)\exp\left(-c_{t}^{(3)}(y)\left(\left|z\right|-c_{t}^{(2)}(y)\right)^{2}\right),
\]
where 
\[
c_{t,\kappa}^{(1)}(y)=\left\Vert \det\gamma_{X_{t}(y)}^{-1}\right\Vert _{L^{q}}^{l}\left\Vert \mathbf{D}\, X_{t}(y)\right\Vert _{k',q'}^{l'}
\]
for some integer $l,l',k'$ and constants $q,q'>1$. The function
$c_{t,\kappa}^{(1)}$ may be estimated as in (\ref{eq:6 Nov 6}) which
implies that
\[
\int_{0}^{\infty}c_{t,\kappa}^{(1)}(y)M_{\beta}(y)\, dy<\infty.
\]
This is sufficient to guarantee the smoothness of the density $p_{X_{t}^{H}(Y_{\beta})}$.
We state this result in the following proposition.
\begin{prop}
Assume that Hypotheses (\textbf{H.2}), (\textbf{H.4}) and (\textbf{H.5})
are satisfied. Then, for $t\in\left(0,T\right]$, the density $p_{X_{t}^{H}(Y_{\beta})}$
is a smooth ($C^{\infty}$) function.
\end{prop}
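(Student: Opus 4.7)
The plan is to establish smoothness by differentiating under the integral sign in the mixture representation~(\ref{eq:density-1}) and then iterating. Fix $t\in(0,T]$ and a multi-index $\kappa$. Since for every $y>0$ the density $p_{X_{t}^{H}(y)}$ is already known to be $C^{\infty}$ in $z$, the formal derivative is
\[
\partial_{\kappa}p_{X_{t}^{H}(Y_{\beta})}(z)=\int_{0}^{\infty}\partial_{\kappa}p_{X_{t}^{H}(y)}(z)\,M_{\beta}(y)\,dy,
\]
so the task reduces to producing, for every $\kappa$ and every compact $K\subset\mathbb{R}^{n}$, a function $h_{\kappa,K}:(0,\infty)\to[0,\infty)$ which dominates $|\partial_{\kappa}p_{X_{t}^{H}(y)}(z)|$ uniformly for $z\in K$ and satisfies $\int_{0}^{\infty}h_{\kappa,K}(y)M_{\beta}(y)\,dy<\infty$. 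Once this is done, standard dominated-convergence justifies the differentiation, and applying the same argument inductively to $\partial_{\kappa}p_{X_{t}^{H}(\cdot)}$ yields the desired $C^{\infty}$ regularity.

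The pointwise derivative bound comes from the Malliavin integration-by-parts formula as in the proof of Proposition~2.1.5 of \cite{Nualart2006a}, which yields
\[
|\partial_{\kappa}p_{X_{t}^{H}(y)}(z)|\leq c_{t,\kappa}^{(1)}(y)\exp\!\left(-c_{t}^{(3)}(y)\bigl(|z|-c_{t}^{(2)}(y)\bigr)^{2}\right),
\]
where $c_{t,\kappa}^{(1)}(y)=\|\det\gamma_{X_{t}(y)}^{-1}\|_{L^{q}}^{l}\,\|\mathbf{D}X_{t}(y)\|_{k',q'}^{l'}$ for integers $l,l',k'$ and exponents $q,q'>1$ that depend only on $|\kappa|$. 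Repeating the derivation of~(\ref{eq:6 Nov 6}) with these new exponents, and using the uniform bounds (\ref{eq:1 Nov 6})--(\ref{eq:3 Nov 6}) together with the Fernique-type tail estimate~(\ref{eq:tail-estimate_bh}), produces
\[
c_{t,\kappa}^{(1)}(y)\leq C\Big(1+t^{a}F\bigl(t,\tfrac{1}{\delta}-1,\tfrac{1}{\delta},Cy^{1/(2\delta)}\bigr)^{b}\Big),
\]
for constants $a,b,C>0$ depending on $\kappa$ but not on $y$. Bounding the Gaussian factor by $1$ on $K$ supplies the candidate dominating function $h_{\kappa,K}(y)$.

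The main remaining issue, and the genuine obstacle, is the integrability of $h_{\kappa,K}$ against $M_{\beta}$. This however is exactly the verification already carried out in Subsection~\ref{sub:upper_bound}: the asymptotic~(\ref{eq:7 Nov 6}) shows that $M_{\beta}(y)$ decays like $\exp(-cy^{1/(1-\beta)})$ as $y\to\infty$, while the growth of $F(t,\tfrac{1}{\delta}-1,\tfrac{1}{\delta},Cy^{1/(2\delta)})$ is driven by $y^{1/(2\delta)}$ in the exponent. Since $\tfrac{1}{2\delta}<1<\tfrac{1}{1-\beta}$, the integral $\int_{0}^{\infty}F(\,\cdot\,)^{p}M_{\beta}(y)\,dy$ is finite for every $p>0$, and consequently $\int_{0}^{\infty}h_{\kappa,K}(y)M_{\beta}(y)\,dy<\infty$. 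The only bookkeeping subtlety is to track how $q,q',k',l,l'$ in Nualart's inequality scale with $|\kappa|$; they remain finite at each fixed order, so the dominating function is merely adjusted accordingly and the induction closes, yielding $p_{X_{t}^{H}(Y_{\beta})}\in C^{\infty}(\mathbb{R}^{n})$.
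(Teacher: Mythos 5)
Your proposal is correct and follows essentially the same route as the paper: differentiation under the integral sign in the mixture representation~(\ref{eq:density-1}), the derivative bound from Proposition~2.1.5 of \cite{Nualart2006a} with the constant $c_{t,\kappa}^{(1)}(y)$ estimated as in~(\ref{eq:6 Nov 6}), and integrability against $M_{\beta}$ via the asymptotic~(\ref{eq:7 Nov 6}) and the inequality $\tfrac{1}{2\delta}<1<\tfrac{1}{1-\beta}$. Your added care about locally uniform domination on compacts and the dominated-convergence justification only makes explicit what the paper leaves implicit.
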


\subsection*{Acknowledgments}

We would like to thank Professor David Nualart for reading the first
version of that paper and the suggestion for studying a more general
case presented here. Financial support of the project CCM - PEst-OE/MAT/UI0219/2014
and Laboratory LIBMA form the University Cadi Ayyad Marrakech are
gratefully acknowledged.


\begin{thebibliography}{BNOT14}

\bibitem[BH07]{Baudoin2007}
F.~Baudoin and M.~Hairer.
\newblock {A version of H{\"o}rmander's theorem for the fractional {B}rownian
  motion}.
\newblock {\em Probab.~Theory Related Fields}, 139(3-4):373--395, 2007.

\bibitem[BNOT14]{Baudoin:2014tw}
F.~Baudoin, E.~Nualart, C.~Ouyang, and S.~Tindel.
\newblock {On probability laws of solutions to differential systems driven by a
  fractional Brownian motion}.
\newblock {\em arXiv.org}, January 2014.

\bibitem[BOT14]{BOT14}
F.~Baudoin, C.~Ouyang, and S.~Tindel.
\newblock Upper bounds for the density of solutions to stochastic differential
  equations driven by fractional {B}rownian motions.
\newblock {\em Ann.~Inst.~Henri Poincar\'e Probab.~Stat.}, 50(1):111--135,
  2014.

\bibitem[CQ02]{Coutin2002}
L.~Coutin and Z.~Qian.
\newblock {Stochastic analysis, rough path analysis and fractional Brownian
  motions}.
\newblock {\em Probab.~Theory Related Fields}, 122(1):108--140, 2002.

\bibitem[DSU08]{DSU08}
A.~A. Dubkov, B.~Spagnolo, and V.~V. Uchaikin.
\newblock L{\'e}vy flight superdiffusion: an introduction.
\newblock {\em Internat.~J.~Bifur.~Chaos Appl.~Sci.~Engrg.}, 18(9):2649--2672,
  2008.

\bibitem[HN07]{Hu2007}
Y.~Hu and D.~Nualart.
\newblock Differential equations driven by {H\"o}lder continuous functions of
  order greater than 1/2.
\newblock In {\em Stochastic analysis and applications}, pages 399--413.
  Springer, 2007.

\bibitem[Lyo94]{Lyons1994}
T.~Lyons.
\newblock {Differential equations driven by rough signals. I. An extension of
  an inequality of L.~C.~Young}.
\newblock {\em Math.~Res.~Lett}, 1(4):451--464, 1994.

\bibitem[MM09]{Mura_mainardi_09}
A.~Mura and F.~Mainardi.
\newblock A class of self-similar stochastic processes with stationary
  increments to model anomalous diffusion in physics.
\newblock {\em Integr.~Transf.~Spec.~F.}, 20(3-4):185--198, 2009.

\bibitem[MMP10]{Mainardi_Mura_Pagnini_2010}
F.~Mainardi, A.~Mura, and G.~Pagnini.
\newblock The {$M$}-{W}right function in time-fractional diffusion processes: A
  tutorial survey.
\newblock {\em Int.~J.~Differential Equ.}, 2010:Art.~ID 104505, 29, 2010.

\bibitem[MP08]{Mura_Pagnini_08}
A.~Mura and G.~Pagnini.
\newblock Characterizations and simulations of a class of stochastic processes
  to model anomalous diffusion.
\newblock {\em J.~Phys.~A}, 41(28):285003, 22, 2008.

\bibitem[Mur08]{Mura2008}
A.~Mura.
\newblock {\em Non-{M}arkovian Stochastic Processes and their Applications:
  From Anomalous Diffusions to Time Series Analysis}.
\newblock PhD thesis, Bologna, 2008.

\bibitem[MvN68]{MandelbrotNess1968}
B.~B. Mandelbrot and J.~W. van Ness.
\newblock {F}ractional {B}rownian motions, fractional noises and applications.
\newblock {\em SIAM Review}, 10:422--437, 1968.

\bibitem[NR02]{Nualart2002}
D.~Nualart and A.~R{\u{a}}{\c{s}}canu.
\newblock Differential equations driven by fractional {B}rownian motion.
\newblock {\em Collect.~Math.}, 53(1):55--81, 2002.

\bibitem[NS09]{Nualart2009}
D.~Nualart and B.~Saussereau.
\newblock {Malliavin calculus for stochastic differential equations driven by a
  fractional Brownian motion}.
\newblock {\em Stochastic Process.~Appl.}, 119(2):391--409, 2009.

\bibitem[Nua06]{Nualart2006a}
D.~Nualart.
\newblock {\em The Malliavin calculus and related topics}.
\newblock Springer, 2006.

\bibitem[Sau12]{Saussereau2012}
B.~Saussereau.
\newblock A stability result for stochastic differential equations driven by
  fractional {B}rownian motions.
\newblock {\em Int.~J.~Stoch.~Anal.}, 2012:1--13, 2012.

\bibitem[Sch90]{Schneider90}
W.~R. Schneider.
\newblock Grey noise.
\newblock In S.~Albeverio~et al., editor, {\em Stochastic processes, physics
  and geometry}, pages 676--681. World Sci.~Publ., Teaneck, NJ, 1990.

\bibitem[Sch92]{MR1190506}
W.~R. Schneider.
\newblock Grey noise.
\newblock In S.~Albeverio, J.~.E. Fenstad, H.~Holden, and T.~Lindstr{\o}m,
  editors, {\em Ideas and methods in mathematical analysis, stochastics, and
  applications ({O}slo, 1988)}, pages 261--282. Cambridge Univ.~Press,
  Cambridge, 1992.

\bibitem[Taq03]{Taqqu03}
M.~S. Taqqu.
\newblock Fractional {B}rownian motion and long-range dependence.
\newblock In {\em Theory and applications of long-range dependence}, pages
  5--38. Birkh\"auser Boston, Boston, MA, 2003.

\bibitem[You36]{Young1936}
L.~C. Young.
\newblock An inequality of the {H\"o}lder type, connected with {S}tieltjes
  integration.
\newblock {\em Acta Math.}, 67(1):251--282, 1936.

\bibitem[Z{\"a}h98]{Zaehle1998}
M.~Z{\"a}hle.
\newblock {Integration with respect to fractal functions and stochastic
  calculus. I}.
\newblock {\em Probab.~Theory Related Fields}, 111(3):333--374, 1998.

\end{thebibliography}

\end{document}